\title{Computing twisted conjugacy classes in free groups using nilpotent quotients}
\author{P. Christopher Staecker\thanks{\emph{Email:} cstaecker@messiah.edu} \\
Assistant Professor, Dept. of Mathematical Sciences, \\
Messiah College, Grantham PA, 17027.}
\newtheorem{thm}{Theorem}
\newtheorem{prop}[thm]{Proposition}
\theoremstyle{definition}
\newtheorem{exl}[thm]{Example}
\newtheorem{rem}[thm]{Remark}
\numberwithin{thm}{section}
\renewcommand{\phi}{\varphi}
\newcommand{\pd}{\partial}
\newcommand{\RT}{\text{\textit{RT}}}
\DeclareMathOperator{\Fix}{Fix}
\newcommand{\magma}{\textsc{Magma}}
\newcommand{\Reid}{\mathcal{R}}
\newcommand{\Z}{\mathbb{Z}}
\newcommand{\lift}{\widetilde}
\newcommand{\abel}{\bar}
\newcommand{\nilp}{\widehat}
\newcommand{\journalversion}[1]{}
\begin{document}

\bibliographystyle{named}
\maketitle

\begin{abstract}
There currently exists no algebraic algorithm for computing twisted
conjugacy classes in free groups. We propose a new technique for
deciding twisted conjugacy relations using nilpotent quotients. Our
technique is a generalization of the common abelianization method, but
admits significantly greater rates of success. We present experimental results
demonstrating the efficacy of the technique, and detail how it can be
applied in the related settings of surface groups and doubly twisted
conjugacy. 
\end{abstract}

\section{Introduction}
Given two elements $x$ and $y$ in a group $G$ and an endomorphism
$\phi: G \to G$, we say that $x$ and $y$ are \emph{twisted conjugate}
if there is some $z$ such that 
\[ x = \phi(z) y z^{-1}. \]
Twisted conjugacy is a generalization of the ordinary conjugacy
relation in groups, and the computation of twisted conjugacy classes
is a problem of considerable difficulty for many groups $G$.

Computing twisted conjugacy classes (also called ``Reidmeister
classes'') is of interest in various algebraic contexts. Our own
approach will be motivated by Nielsen fixed point theory, though other
motivations exist (see \cite{bmmv06}, in which the twisted conjugacy
problem in free groups arises naturally in the context of the ordinary
conjugacy problem in certain other groups).

Extant algebraic techniques for computing twisted conjugacy classes
are \emph{ad hoc} in nature, and the goal of this paper is to present a new
technique which is more generally applicable (though still not in
general algorithmic), along with experimental results demonstrating
its success.

Our technique is an extension of the common abelianization
technique. Viewing the abelianization as the first nilpotent quotient,
we show that twisted conjugacy classes can be distinguished with
increasing success rates when projected into nilpotent quotients of
increasing nilpotency class. These projections, and the computations
necessary to compute twisted conjugacy in nilpotent groups, can be
fairly intensive, and as such we have implemented our technique in
the \magma\ programming language. \journalversion{Our implementation
  has been included as a supplement to this paper at \url{http://www.expmath.org/???}}

In Section \ref{nielsen} we give the motivation for the twisted
conjugacy problem from Nielsen theory, Section \ref{quotients} is an
outline of our technique, Section \ref{examples} gives some examples,
Section \ref{rates} presents our experimental results, and Sections
\ref{surface} and \ref{doublesection} show how the technique can be
adapted into two natural generalizations of the main problem.

The bulk of this work was completed as part of the author's doctoral
dissertation, advised by Robert F. Brown. The content in this
paper was also advised and suggested by Peter Wong. The author
wishes to acknowledge their help and support, as well as helpful
input from Seungwon Kim, Armando Martino, and Evelyn Hart.

\section{Nielsen fixed point theory}\label{nielsen}
Our principal motivation for studying the twisted conjugacy problem is
Nielsen fixed point theory (standard references are \cite{jian83} and
\cite{kian80}). Given a map $f:X \to X$ of a 
space with universal covering space $\lift X$ and projection
map $p:\lift X \to X$, the fixed points of $f$ are partitioned into
classes of the form $p(\Fix(\lift f))$, where $\lift f$ is a lift of
$f$. If a single lift $\lift f$ is fixed once and for all, each fixed
point class can be expressed as $p(\Fix \alpha^{-1} \lift f)$ for some
$\alpha \in \pi_1(X)$. A central problem in Nielsen fixed point theory 
is to determine the number of ``essential'' fixed point
classes of a mapping. This number gives a lower bound for the minimal
number of fixed points for maps in the homotopy class of $f$, and in
the case when $X$ is a manifold of dimension at least 3, this
``Nielsen number'' is in fact equal to the minimal number of fixed
points.

A fundamental question when counting fixed point classes is the
following: given two elements $\alpha, \beta \in \pi_1(X)$, when is
$p(\Fix(\alpha^{-1} \lift f)) = p(\Fix(\beta^{-1} \lift f))$? The question can
be answered with an elementary argument in covering space theory. Two
such fixed point classes are equal if and only if there is some
$\gamma \in \pi_1(X)$ with 
\[ \alpha = \phi(\gamma) \beta \gamma^{-1}, \]
where $\phi: \pi_1(X) \to \pi_1(X)$ is the map induced by $f$ on the
fundamental group. If the above holds, we say that the elements
$\alpha$ and $\beta$ are \emph{twisted conjugate} (if $\phi$ is the
identity map, this is the ordinary conjugacy relation). The 
twisted conjugacy classes of $f$ are also called \emph{Reidemeister
  classes}, and the set of such classes is denoted $\Reid(\phi)$.

In the case where $f$ is a selfmap of a compact surface with boundary,
the fundamental group $\pi_1(X)$ will be a free group, say $\pi_1(X) =
\langle g_1, \dots, g_k \rangle$. No general algorithm is known for computing
the Nielsen number of such a map (the case $k=1$ is classically known,
and an algorithm for the case $k=2$ has recently been developed by Yi and
Kim \cite{yk08}, extending work by Joyce Wagner \cite{wagn99}). Fadell
and Husseini, in \cite{fh83}, proved: 
\begin{thm}[Fadell, Husseini, 1983] \label{foxcalc}
Let $\phi$ be the map induced by $f$ on $\pi_1(X)$. Then the Nielsen
number of $f$ is the number of terms with nonzero coefficient in
\[ \RT(\phi) = \rho \left( 1 - \sum_{i = 1}^n \frac{\pd}{\pd g_i}
\phi(g_i) \right), \]
where $\pd$ is the Fox Calculus operator (see \cite{cf63}), and 
$\rho: \Z\pi_1(X) \to \Z\Reid(\phi)$ is the linear extension of the
projection into twisted conjugacy classes.
\end{thm}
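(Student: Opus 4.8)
The plan is to recognize $\RT(\phi)$ as the \emph{Reidemeister trace} (generalized Lefschetz number) of $f$, and then to invoke the classical fact that, when this trace is expanded in the natural basis $\Reid(\phi)$ of $\Z\Reid(\phi)$, the coefficient of each class is exactly its fixed point index. The Nielsen number then drops out as the number of nonzero coefficients, since a fixed point class is essential precisely when its index is nonzero.

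First I would reduce to the case in which $X$ is a wedge of $k$ circles. A compact surface with nonempty boundary deformation retracts onto a bouquet $Y = \bigvee_{i=1}^{k} S^1$ with $\pi_1(Y) = \langle g_1, \dots, g_k \rangle$, and under this equivalence $f$ corresponds, up to homotopy, to a selfmap $g$ of $Y$ inducing the same endomorphism $\phi$. Since the Nielsen number, the partition of $\pi_1$ into twisted conjugacy classes, and the fixed point indices are all preserved by such a commuting square built from a homotopy equivalence, it suffices to prove the formula for $g$ on $Y$. After a further homotopy I may assume $g$ fixes the wedge point, and I fix once and for all the lift $\lift g$ of $g$ to the universal cover (a tree) that fixes the chosen base vertex.

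Next I would compute the Reidemeister trace from the cellular chain complex of $\lift Y$ over $\Z\pi_1(Y)$, namely $C_1 = \bigoplus_{i=1}^{k} \Z\pi_1(Y)\, e_i \xrightarrow{\pd_1} C_0 = \Z\pi_1(Y)\, v$ with $\pd_1 e_i = (g_i - 1)v$. The $\phi$-twisted chain map induced by $\lift g$ satisfies $\lift g_0(v) = v$ and $\lift g_1(e_i) = \sum_{j} \frac{\pd}{\pd g_j}\phi(g_i)\, e_j$, the latter being precisely the geometric content of the Fox derivative: the lift of the loop $\phi(g_i)$ crosses the $j$-th family of edges with algebraic multiplicity $\frac{\pd}{\pd g_j}\phi(g_i)$. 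The Reidemeister trace is, by definition, the alternating sum over $q$ of $\rho$ applied to the twisted trace of $\lift g_q$, and it is a chain-homotopy invariant, so the particular cellular model is immaterial. Reading off the diagonal entries gives twisted trace $1$ in degree $0$ and $\sum_{i} \frac{\pd}{\pd g_i}\phi(g_i)$ in degree $1$, whence the Reidemeister trace of $g$ equals $\rho\!\left(1 - \sum_{i} \frac{\pd}{\pd g_i}\phi(g_i)\right) = \RT(\phi)$.

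Finally I would cite the structure theorem for the Reidemeister trace (Reidemeister, Wecken; see also Husseini): for a selfmap of a finite complex, the coefficient of $[\alpha] \in \Reid(\phi)$ in the Reidemeister trace is the fixed point index of the class $p(\Fix(\alpha^{-1}\lift g))$. A fixed point class is essential exactly when this index is nonzero, and $N(g)$ is the number of essential classes, so the number of terms with nonzero coefficient in $\RT(\phi)$ equals $N(g) = N(f)$. The main obstacle is not the bookkeeping above but the two imported inputs: the naturality of the entire Nielsen-theoretic apparatus (classes, indices, the trace) under the deformation retraction to the bouquet, and the identification of the algebraically defined twisted trace with genuine geometric fixed point indices. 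Proving these, rather than merely restating them, is the real substance of Fadell and Husseini's argument.
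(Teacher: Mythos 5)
The paper gives no proof of this theorem; it is quoted from Fadell and Husseini \cite{fh83} and stated without argument, so there is no in-paper proof to compare yours against. On its own terms your sketch follows the standard route: reduce via the deformation retraction to a selfmap of a bouquet of circles, compute the twisted cellular trace of a basepoint-preserving lift on the universal cover (a tree), where the degree-one matrix is exactly the Fox Jacobian of $\phi$ and the degree-zero contribution is $1$, and then appeal to the Reidemeister--Wecken identification of the $[\alpha]$-coefficient of $\RT(\phi)$ with the fixed point index of the class $p(\Fix(\alpha^{-1}\lift f))$. You are right to single out the two imported ingredients --- homotopy and commutation invariance of the full Nielsen apparatus under the retraction, and the index-equals-coefficient theorem for finite polyhedra --- as the genuine mathematical content; that is where the substance of Fadell and Husseini's argument lies, and the chain-level bookkeeping you carry out is the routine part. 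Your outline is accurate but, as you acknowledge, it is a citation-expanded paraphrase rather than a self-contained proof, which matches the level of detail the paper itself offers for this result.
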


The theorem above thus reduces the problem of computing the Nielsen
number to the application of the projection $\rho$, which requires
some algorithm for distinguishing twisted conjugacy classes. No such
algorithm exists in the literature, though Bogopolski,
Martino, Maslakova, and Ventura give an algorithm in \cite{bmmv06} for
the case where $\phi$ is an isomorphism. Their algorithm involves
the traintracks machinery of Bestvina and Handel \cite{bh92}. This paper
explores a purely algebraic technique which can be implemented by
existing computer algebra systems, and in some cases can be done by
hand. 

\section{Abelian and nilpotent quotients}\label{quotients}
Throughout this and the next two sections, let $G$ be a finitely generated
free group, and let $\phi:G \to G$ be an
endomorphism. For an element $g \in G$, let $[g]$ denote the twisted
conjugacy class of $g$. Our goal is to confirm or deny the equality
$[g] = [h]$ for two group elements $g$ and $h$.

Existing algebraic techniques for deciding equality of such classes
are surveyed in \cite{hart05}. One such technique is the algorithm of
Wagner \cite{wagn99}, which is applicable if the mapping $\phi$
satisfies the combinatorial ``remnant'' condition (this condition is
satisfied with probability approaching 1 as the word lengths of the
images of generators increase).
For general mappings (without remnant), distinguishing classes is
typically done by projecting into the abelianization $\abel G$ and
solving the twisted conjugacy relation there. 

Abelianization is often sucessful at showing that two classes are
distinct, but cannot be used to show that two classes are
equal. Projecting into nilpotent quotients is a natural generalization
of the abelianization technique, and we show how it can also furnish a
technique for equating classes.

We will use the commutator notation $[x,y] = xyx^{-1}y^{-1}$. 
Let $\gamma_n(G)$ be the terms of the lower central series, and let $\nilp
G_n = G / \gamma_n(G)$. Each of the groups $\nilp G_n$ are nilpotent
of class $n$. The abelianization is of course $\abel{G} =
\nilp{G}_1 = G/\gamma_1(G)$. In general, we will use a bar to indicate
projection of elements into the abelianization, and a hat to
indicate projection into $\nilp G_n$, with the value of $n$ to be
understood by context.

Computation in $\abel G$ is made easy by commutativity. For $n>1$, the 
groups $\nilp G_n$ are not commutative, but powerful commutation rules
make computation possible. The following easily verifiable
commutator identities hold in any group:
\begin{align*}
yx &= [y,x]xy, \\
[y,x] &= [x,y]^{-1}, \\
[xy,z] &= [x,[y,z]][y,z][x,z]
\end{align*}

These rules have a nicer form in a class 2 nilpotent group, where all
commutators will freely commute:
\begin{prop}\label{class2rules}
If $G$ is a class 2 nilpotent group, then, for any $x,y,z \in G$, we
have
\begin{align}
yx &= xy[x,y]^{-1}, \label{commutation} \\
[y,x] &= [x,y]^{-1}, \label{cominv} \\
[xy,z] &= [x,z][y,z]. \label{comsplit}
\end{align}
\end{prop}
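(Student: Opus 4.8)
The plan is to obtain all three identities as purely formal consequences of the general commutator identities displayed just before the statement, feeding in the single structural fact that characterizes class 2 nilpotency: \emph{every commutator is central}. Concretely, $G$ being nilpotent of class 2 means $\gamma_2(G) = [\,[G,G],G\,]$ is trivial, so for any $x,y \in G$ the commutator $[x,y] \in [G,G]$ commutes with every element of $G$; in particular any two commutators commute with each other. I would state and verify this observation first, since everything after it is bookkeeping.

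Identity \eqref{cominv} requires nothing from nilpotency: it is exactly the general relation $[y,x] = [x,y]^{-1}$ already recorded as valid in any group, so I would simply cite that. For \eqref{commutation}, I would start from the general rule $yx = [y,x]xy$, rewrite $[y,x]$ as $[x,y]^{-1}$ via \eqref{cominv}, and then slide the central element $[x,y]^{-1}$ past $xy$ to the right, obtaining $yx = xy[x,y]^{-1}$.

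For \eqref{comsplit}, I would begin with the general expansion $[xy,z] = [x,[y,z]][y,z][x,z]$. Because $[y,z]$ is central, conjugation by $x$ fixes it, so $[x,[y,z]] = x[y,z]x^{-1}[y,z]^{-1} = 1$; this collapses the expansion to $[xy,z] = [y,z][x,z]$. Since both $[y,z]$ and $[x,z]$ are central they commute, and we arrive at $[xy,z] = [x,z][y,z]$.

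I do not anticipate a genuine obstacle here: the proposition is essentially a dictionary entry translating the general identities into the class 2 setting, and the only point demanding care is to make the centrality claim precise from the convention for the lower central series in force (here $\gamma_1(G) = [G,G]$, so class 2 means $\gamma_2(G) = 1$) and then to invoke it at exactly the two places above. Accordingly I would keep the write-up to just a few lines.
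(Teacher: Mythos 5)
Your proof is correct and takes exactly the route the paper intends: the paper offers no written proof, but it lists the three general commutator identities immediately before the proposition and remarks that in a class 2 nilpotent group "all commutators will freely commute," which is precisely the centrality observation you isolate and then apply at the two places where it matters. The only caution worth keeping is the one you already flag — reading "class 2" against this paper's convention $\gamma_1(G)=[G,G]$, so that $\gamma_2(G)=1$ does indeed say commutators are central.
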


In a class 2 nilpotent group, we may use (\ref{commutation}) to
exchange the order of any non-commutator elements. Using (\ref{comsplit})
we may write any commutator as a product of commutators of
generators. Having reduced all commutators to commutators of
generators, we may use (\ref{cominv}) to ensure that the generators
appear in a prescribed order. Viewed as a set of word rewriting rules,
Proposition \ref{class2rules} suggests that there will be some sort of
normal form in nilpotent groups which can be used to compare words. 

The desired normal form is provided by a theorem of P. Hall. Before
stating the theorem, we give some terminology and notation, following
\cite{hall88}. 

For a free group $G$, consider the elements which can be formed by
taking the closure of the generator set under the commutator
operation. Of these elements, the generators are referred 
to as \emph{weight 1 commutators}, and the weight of any non-generator
element is defined to be the sum of the weights of the elements of
which it is a commutator.

Hall showed that words in $\nilp G_n$ can be given in a normal form
consisting of a product of certain \emph{basic commutators} of weight
$n$ or less given in some proscribed order. The construction of
the basic commutators is somewhat involved, and we refer to
\cite{hall88} and \cite{mks76} for the details. For 
the purpose of the examples in this paper, it is sufficient to know
that in a group of rank 2, say $G = \langle a, b\rangle$: the basic weight 1
commutators are $a$ and $b$, and the only basic weight 2 commutator
is $[a,b]$ (the commutator $[b,a]$ is not basic, since it is
expressable as $[a,b]^{-1}$).
We also refer to Theorem 5.11 of \cite{mks76} which gives a
combinatorial formula due to Witt for $C_n$, the number of basic
weight $n$ commutators:
\[ C_n = \frac1n \sum_{d|n}\mu(d)k^{n/d}, \]
where $\mu$ is the M\"obius function, and $k$ is the number of
generators of $G$.

\begin{thm}[P. Hall, 1957]
For any $x \in G$, we can write the projection $\nilp x \in \nilp G_n$ as
\[ \nilp x = \prod_{i} \nilp c_i^{k_i}, \]
where $\{c_i\}$ is the sequence of basic commutators of weight less
than or equal to $n$.

This form for $\nilp x$ is unique up to the ordering of the weight $n$ basic
commutators, and we call this the \emph{Hall normal form} for $\nilp x$.
\end{thm}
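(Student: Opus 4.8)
The plan is to prove the two halves of the statement by different means: \emph{existence} of the normal form by the collection process, and \emph{uniqueness} by induction on the nilpotency class. For existence, write $x\in G$ as a word in the free generators $g_1,\dots,g_k$, which are exactly the weight-$1$ basic commutators, and rewrite it modulo $\gamma_n(G)$ into a product of basic commutators in the prescribed order. The mechanism is the \emph{collection process}: whenever two adjacent basic commutators $c_j,c$ occur in the wrong relative order, replace the pair using the elementary commutator identities (the arbitrary-class generalizations of the rules preceding Proposition \ref{class2rules}), which introduces a factor $[c_j,c]^{\pm1}$; by the standard theory of basic commutators (\cite{hall88}, \cite{mks76}) this factor expands as a product of basic commutators each of weight $\mathrm{wt}(c_j)+\mathrm{wt}(c)$, strictly larger than $\mathrm{wt}(c_j)$ and $\mathrm{wt}(c)$. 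Thus each rewriting either lengthens the already-collected initial segment or trades letters for letters of strictly greater weight; since in $\nilp G_n$ every basic commutator of weight exceeding $n$ is trivial and only finitely many basic commutators have weight at most $n$, a routine well-ordering argument gives termination, leaving $\nilp x$ in the asserted form.

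For uniqueness I induct on $n$, using the convention $\nilp G_n=G/\gamma_n(G)$. When $n=1$, $\nilp G_1=\abel G$ is free abelian on $\abel g_1,\dots,\abel g_k$, so the exponents on the weight-$1$ basic commutators are forced by $\abel x$. For the step, use the short exact sequence
\[ 1 \longrightarrow \gamma_{n-1}(G)/\gamma_n(G) \longrightarrow \nilp G_n \longrightarrow \nilp G_{n-1} \longrightarrow 1, \]
which is central (as $[G,\gamma_{n-1}(G)]=\gamma_n(G)$) and whose kernel is generated by the images of the basic commutators of weight exactly $n$; these images are central in $\nilp G_n$, which is precisely why the weight-$n$ factors of a normal form may be permuted freely. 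Given two Hall normal forms for $\nilp x$ that agree in $\nilp G_n$, split each as $P_iQ_i$ with $P_i$ the product over basic commutators of weight $<n$ and $Q_i$ the product over those of weight $n$, and project the equation $P_1Q_1=P_2Q_2$ to $\nilp G_{n-1}$: the $Q_i$ die, and the images of $P_1,P_2$ agree. But each $P_i$ is itself a Hall normal form for $\nilp G_{n-1}$, so the inductive hypothesis forces the exponents on all basic commutators of weight $<n$ to coincide; hence $P_1=P_2$ in $\nilp G_n$, and cancelling gives $Q_1=Q_2$ inside the kernel $\gamma_{n-1}(G)/\gamma_n(G)$.

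It remains to see that $\gamma_{n-1}(G)/\gamma_n(G)$ is free abelian with the weight-$n$ basic commutators as a basis; spanning is immediate from the collection argument above, so the content is \emph{independence} of those commutators, and this is the one genuinely non-formal ingredient and the main obstacle. It is the Magnus--Witt theorem, proved by embedding $G$ into the group of units $1+(\text{augmentation ideal})$ of the power-series ring $\Z\langle\langle X_1,\dots,X_k\rangle\rangle$ via $g_i\mapsto 1+X_i$ and verifying that distinct weight-$n$ basic commutators have $\Z$-linearly independent degree-$n$ leading forms; this simultaneously recovers Witt's count $C_n=\tfrac1n\sum_{d\mid n}\mu(d)k^{n/d}$ already quoted above. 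Granting it, freeness of the kernel forces the weight-$n$ exponents of $Q_1$ and $Q_2$ to agree as well, completing the induction. In keeping with the expository goals of this paper one would cite \cite{hall88} and \cite{mks76} for this independence statement rather than reproduce its proof, and likewise defer the (tedious but standard) termination bookkeeping of the collection process to those references.
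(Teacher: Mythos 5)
The paper does not prove this theorem: it is cited as a classical result of P.\ Hall, with the construction of basic commutators and the verification of the normal form explicitly deferred to \cite{hall88} and \cite{mks76}. So there is no ``paper's proof'' to compare against; your sketch is being measured against the standard textbook argument in those references, and it matches it. Existence by the collection process, uniqueness by induction up the lower central series using centrality of $\gamma_{n-1}(G)/\gamma_n(G)$ in $\nilp G_n$, and the identification of the one genuinely non-formal ingredient --- that the weight-$n$ basic commutators are a $\Z$-basis of $\gamma_{n-1}(G)/\gamma_n(G)$, proved via the Magnus embedding and yielding Witt's formula --- is exactly the structure of the proof in Magnus--Karrass--Solitar and Hall's lecture notes. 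You have also correctly adapted to the paper's off-by-one indexing convention ($\abel G = G/\gamma_1(G)$, so $\gamma_1(G)$ here is the commutator subgroup).

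One small imprecision worth flagging: when the collection process introduces a factor $[c_j,c]$ with $c_j,c$ basic, it need not expand as a product of basic commutators \emph{of weight exactly} $\mathrm{wt}(c_j)+\mathrm{wt}(c)$; the correct statement is that it is congruent, modulo terms of strictly higher weight, to such a product, or equivalently that it expands into basic commutators of weight $\ge \mathrm{wt}(c_j)+\mathrm{wt}(c)$. Since everything of weight $>n$ dies in $\nilp G_n$ and the lower bound still strictly exceeds $\mathrm{wt}(c_j)$ and $\mathrm{wt}(c)$, your termination argument is unaffected, but the phrasing as stated is not quite right. With that adjustment the sketch is correct and faithful to the cited sources.
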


Since $\phi(\gamma_n(G)) \subset \gamma_n(G)$, there is a well defined
quotient mapping $\nilp \phi:\nilp G_n \to \nilp G_n$. Thus it
makes sense to ask, for $h, g \in G$, whether or not $[\nilp h] =
[\nilp g]$ in $\nilp G_n$, that is, whether or not there is some $z
\in \nilp G_n$ with 
\[ \nilp h = \nilp \phi(z) \nilp g z^{-1}. \]
If $[\nilp h] \neq [\nilp g]$ in $\nilp G_n$, then we know that $[h]
\neq [g]$ in $G$. 

\section{Some examples}\label{examples}
We begin with a sample computation by hand, showing how the Hall
normal form can be used to solve twisted conjugacy relations.

\begin{exl}
We will compute the Nielsen number of the map on a surface
with fundamental group $G = \langle a, b\rangle$ which induces the
homomorphism: 
\[ \phi: \begin{array}{rcl} a & \mapsto & ab \\
  b & \mapsto & b^2 a^4 \end{array} \]

Theorem \ref{foxcalc} gives
\[ \RT(\phi) = \rho(-1 - b) \]
and thus we need only decide whether or not $[1] = [b]$. First we
attempt to equate these classes in the abelianization $\abel
G$. Writing elements additively, an element $z\in \abel G$ is of
the form $z = n \abel a + m \abel b$, and we wish to solve
\[ 0 = \abel \phi(z) + \abel b - z. \]
We compute $\abel \phi(z) = n(\abel a + \abel b) + m(4 \abel a + 2 \abel
b)$ and $-z = -n \abel a - m \abel b$, and the above equation becomes
\[ -\abel b = 4m \abel a + (n + m)\abel b, \]
and we can solve for $n$ and $m$ to find that $z = -\abel a$ is a
solution. 

\begin{rem}\label{candremark}
Given that $1$ and $b$ are twisted conjugate in the abelianization by
the element $-\abel a$, we might hope that $1$ and $b$ are twisted
conjugate in the group $G$ by the element $a^{-1}$. This is not the case,
however, as $\phi(a^{-1}) b a = b^{-1}a^{-1}ba$. The possibility
remains, however, that these elements are twisted conjugate by some
more complicated word which abelianizes to $-\abel a$.
\end{rem}

Having failed to decide the twisted conjugacy after a check in the
abelianization, we proceed to the class 2 nilpotent quotient $\nilp
G_2$. Any $z \in G_2$ is of the form $z = \nilp a^n \nilp b^m
[\nilp a, \nilp b]^k$. We wish to solve
\[ 1 = \nilp \phi(z) \nilp b z^{-1}. \]
We already know by the above calculation, however, that any such
element $z$ must abelianize to $\nilp a \in \nilp G$ in order to satisfy
the twisted conjugacy equation. Thus we may assume that $z = \nilp a^{-1}
[\nilp a, \nilp b]^k$. Now we compute
\[ z^{-1} = [\nilp a, \nilp b]^{-k} \nilp a = \nilp a [\nilp a,
  \nilp b]^{-k}, \]
and 
\begin{align*} \nilp \phi(z) &= \nilp b^{-1} \nilp a^{-1} [\nilp a \nilp b,
	\nilp b^2 \nilp a^4]^k = \nilp a^{-1} \nilp b^{-1} [\nilp a^{-1}, \nilp
  b^{-1}] [\nilp a, \nilp b]^{2k} [\nilp b, \nilp 
  a]^{4k} = \nilp a^{-1}\nilp b^{-1} [\nilp a,\nilp b]^{-2k+1}, \end{align*}
where we have used the rules of Proposition \ref{class2rules},
together with the identity $[x^i, z] = [x, z]^i$, which follows from
setting $x=y$ in identity (\ref{comsplit}). 

We are now ready to test the twisted conjugacy equation above. The
right hand side is:
\[ \nilp \phi(z) \nilp b z^{-1} = \nilp a^{-1} \nilp b^{-1} [\nilp a, \nilp
  b]^{-2k-8} \nilp b \nilp a [\nilp a, \nilp b]^{-k} = [\nilp a, \nilp
  b]^{-3k - 8}. \]
Setting this equal to 1 requires that $-3k - 8 = 0$, which is
  impossible for $k \in \Z$. Thus there can be no such $z \in \nilp
  G_2$, and so $[1] \neq [b]$ and the Nielsen number is 2.
\end{exl}

The example above involved a computation first in $\abel G$ and then in
$\nilp G_2$. In each step, there are essentially two types of
computational operations involved. The first is the term
rewriting to obtain the Hall normal form, which was fairly easy in
this case but in general can be quite tedious (though completely
algorithmic). The second is finding the solution to a linear system,
which was used to solve for $n$ and $m$ in the $\abel G$ step, and
used to solve for $k$ in the $\nilp G_2$ step (in a free group with
more generators, this would have been a linear system with more than
one equation).  

As is to be expected, computation of twisted conjugacy classes in many
cases may require checks in $\nilp G_n$ for $n>2$. Such examples can
easily be constructed by a computer search. 
\begin{exl}
Let $G = \langle a, b\rangle$, and let
\[ \phi: \begin{array}{rcl} a & \mapsto & aba^{-1} \\ b & \mapsto & a^{-2}b^4
\end{array} \]
Theorem \ref{foxcalc} gives
\[ \RT(\phi) = \rho(aba^{-1} - a^{-2} - a^{-2}b - a^{-2}b^2 -
a^{-2}b^3). \]
It can be verified that all five of the above terms are twisted
conjugate in $\nilp G_n$ for $n \in \{1, 2, 3\}$, but none are twisted
conjugate in $\nilp G_4$. Thus the Nielsen number is 5.
\end{exl}

We now turn to the question of verifying equality of twisted conjugacy
classes. Applying the process described above to two words which are
in fact twisted conjugate will result in a non-terminating sequence of
computations in the groups $\nilp G_n$, each time resulting in
solutions for the various elements above labeled $z$.

To avoid such an infinite computation, we propose a technique in the spirit of
Remark \ref{candremark}. In the abelianization $\abel G$, if a
solution element $z$ is obtained, a sequence of ``candidates'' for
testing twisted conjugacy in $G$ is constructed by producing all
possible reorderings of the generators appearing in $z$. Thus if we
obtain an element $z = \abel a - 2\abel b$, our list of candidates will
be
\[ ab^{-2}, b^{-1}ab^{-1}, b^{-2}a. \]
Each of these elements can be tested by twisted conjugation as a
candidate for realizing the twisted conjugacy in $G$.

A similar process can be carried out in $\nilp G_n$ for $n>1$: after
obtaining $z \in \nilp G_n$, our list of candidates is obtained by
inserting the weight $n$ basic commutators appearing in $z$ in all possible
orderings and in various forms into each of our candidates previously
obtained in our check from $\nilp G_{n-1}$.

\begin{exl}
Let $G = \langle a, b \rangle$, and 
\[ \phi: \begin{array}{rcl} a & \mapsto& a^2ba \\ b &\mapsto & b^2 a
\end{array} \]
We will use the above candidates checking construction to show that
$[a] = [a^2 b]$ (the elements $a^2 b$ and $a$ appear in $\RT(\phi)$).

Our check in $\abel G$ shows that the two elements are twisted
conjugate by $-\abel b$. Thus our only candidate from $\abel G$ is the
element $b^{-1}$, but a computation shows that 
\[ \phi(b^{-1}) (a^2 b) b = a^{-1}b^{-2}a^2 b^2 \neq a. \]
Now we do a check in $\nilp G_2$, and we find that the two elements are
twisted conjugate by $\nilp b^{-1} [\nilp a, \nilp b]^{-1} \in \nilp G_2$. Now
our list of candidates is:
\begin{align*}
g_1 &= b^{-1} [a,b]^{-1} = ab^{-1}a^{-1} \\
g_2 &= b^{-1} [a^{-1},b^{-1}]^{-1}= b^{-2}a^{-1}ba \\
g_3 &= b^{-1} [a^{-1},b] = b^{-1}a^{-1}bab^{-1} \\
g_4 &= b^{-1} [a, b^{-1}] = b^{-1}ab^{-1}a^{-1}b \\
g_5 &= [a,b]^{-1} b^{-1} = bab^{-1}a^{-1}b^{-1} \\
g_6 &= [a^{-1}, b^{-1}]^{-1} b^{-1} = b^{-1}a^{-1}bab^{-1} \\
g_7 &= [a^{-1},b] b^{-1} = a^{-1}bab^{-2} \\
g_8 &= [a, b^{-1}] b^{-1} = ab^{-1}a^{-1}
\end{align*}
Computing each of $\phi(g_i)a^2 b (g_i)^{-1}$ reveals that $\phi(g_1)
a^2 b g_1^{-1} = a$, and so $[a] = [a^2 b]$.
\end{exl}

Note that in the above example we inserted the basic commutator
$[a,b]^{-1}$ into the word $b^{-1}$ in each of two positions in one of
four forms, these being the four versions of this commutator in $G$
which become $[\nilp a, \nilp b]^{-1}$ when projected into $\nilp
G_2$. Constructing the various forms of a weight 3 commutator to use
in a list of candidates would be cumbersome, and we do not attempt
this construction for nilpotency class higher than 2.

As an alternative to the candidates construction procedure, a more
pedestrian approach is always available: after a check for twisted
conjugacy in $\nilp G_n$, use as list of candidates all words in $G$ of
word length $n$. This produces a different list of
candidates from that described above, but is guaranteed to find an
element realizing the twisted conjugacy if $n$ is sufficiently high.

\section{Success rates}\label{rates}
The technique exhibited in the examples above can be summarized as
follows: Starting 
with $n=1$, write an expression for a generic element $z \in \nilp
G_n$ and the element $\nilp \phi(z)$ in Hall normal form.
Solve a linear system to decide if the
elements are twisted conjugate in $\nilp G_n$. If the elements are not
twisted conjugate in 
$\nilp G_n$, then the elements are not twisted conjugate in $G$. If the
elements are twisted conjugate in $\nilp G_n$ by a unique element $z$, then
construct a finite list of candidates (either intelligently by using
the structure of $z$ or in a brute force manner by taking all words of
length $n$) for testing twisted conjugacy in $G$. If all of these
candidates fail, then increment $n$ and repeat the above.

This technique will decide any given twisted conjugacy
problem provided that the following statement is true: If $\phi:G \to
G$ is a map on a free group, and $g$ and $h$ are two elements of $G$
which are not twisted conjugate in $G$, then there is some $n$ for
which $g$ and $h$ are not twisted conjugate in $\nilp
G_n$. Thus any non-twisted-conjugate elements will be detected as such
in $\nilp G_n$ for some $n$. Such a statement
does not hold in general, though, as the following argument shows.

\begin{prop}\label{badnews}
For any $\phi:G \to G$, if $g,h\in G$ are words such that $\phi^n(g) \in
h\gamma_n(G)$ for all $n$, then $[\nilp g] = [\nilp h]$ in $\nilp G_n$
for all $n$.
\end{prop}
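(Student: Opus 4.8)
The plan is to reduce the proposition to the elementary fact that, with respect to any endomorphism, every element of a group is twisted conjugate to its own image. Indeed, for any $w\in G$ the identity $w=\phi(w^{-1})\,\phi(w)\,(w^{-1})^{-1}$ shows that $w$ and $\phi(w)$ are twisted conjugate via $z=w^{-1}$. Twisted conjugacy is an equivalence relation — reflexivity uses $z=1$, symmetry follows by replacing $z$ with $z^{-1}$, and transitivity follows from $\phi(z_1)\phi(z_2)=\phi(z_1z_2)$ — so applying this observation successively to $w=g,\phi(g),\dots,\phi^{n-1}(g)$ and composing the conjugating elements yields that $g$ is twisted conjugate to $\phi^{n}(g)$ in $G$, explicitly by $z=\bigl(\phi^{n-1}(g)\cdots\phi(g)\,g\bigr)^{-1}$.

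The second step is to project this relation into $\nilp G_n$. Since $\phi(\gamma_n(G))\subseteq\gamma_n(G)$, the endomorphism $\phi$ descends to $\nilp\phi$ on $\nilp G_n$, and the quotient map $G\to\nilp G_n$ sends the twisted conjugacy $g\sim\phi^{n}(g)$ to a twisted conjugacy $\nilp g\sim\nilp{\phi^{n}(g)}$ in $\nilp G_n$. The hypothesis $\phi^{n}(g)\in h\,\gamma_n(G)$ is precisely the statement that $\nilp{\phi^{n}(g)}=\nilp h$ in $\nilp G_n$, so $\nilp g$ and $\nilp h$ are twisted conjugate there, i.e.\ $[\nilp g]=[\nilp h]$; as $n$ was arbitrary, this holds for every $n$.

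There is no real obstacle in this argument: the content of the proposition is entirely the observation that $[g]=[\phi^{n}(g)]$, together with the hypothesis, which pins down $\phi^{n}(g)$ modulo $\gamma_n(G)$. The only things that need (routine) checking are that twisted conjugacy is an equivalence relation and that it is carried along by the quotient homomorphism, both of which are immediate from the definitions and the inclusion $\phi(\gamma_n(G))\subseteq\gamma_n(G)$ already noted above.
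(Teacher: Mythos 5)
Your proof is correct and takes essentially the same route as the paper: both hinge on the identity showing $w$ and $\phi(w)$ are twisted conjugate (you write $w=\phi(w^{-1})\phi(w)(w^{-1})^{-1}$; the paper writes $\phi(x)=\phi(x)xx^{-1}$, the same fact from the other side), iterate to get $[g]=[\phi^n(g)]$, and then use the hypothesis $\phi^n(g)\in h\gamma_n(G)$ to conclude $[\nilp g]=[\nilp h]$ in $\nilp G_n$. The only cosmetic difference is that the paper applies the identity directly inside $\nilp G_n$ via $\nilp\phi$, while you first establish the twisted conjugacy in $G$ and then push it down the quotient map — both are valid and equivalent.
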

\begin{proof}
The proof is based on the fact that $[\phi(x)] = [x]$ for any element
$x$, since $\phi(x) = \phi(x) x x^{-1}$. Iteration of the map gives
$[\phi^n(x)] = [x]$ for any $n$.

Now for our element $g$, we have $\nilp \phi^n(\nilp g) = \nilp h$ in
$\nilp G_n$, and so in particular $[\nilp \phi^n(\nilp g)] = [\nilp
h]$. But $[\nilp \phi^n(\nilp g)] = [\nilp g]$ by the above, and so
we have $[\nilp g] = [\nilp h]$.
\end{proof}

We can use the above to build homomorphisms $\phi: G \to G$ with the
property that there are words $g, h \in G$ with $[g]\neq [h]$ but
$[\nilp g] = [\nilp h]$ in every nilpotent quotient $\nilp G_n$.

\begin{exl}
Let $G = \langle a, b\rangle$, and let $\phi:G \to G$ be the map
\[ \phi: \begin{array}{rcl} a & \mapsto & [b,a] \\
b & \mapsto & a^{-1}b \end{array} \]
Now we have $\phi(\gamma_n(G)) \subset \gamma_{n+1}(G)$ for $n>0$,
since $\phi$ replaces $a$ with a weight 2 commutator and all
commutators in $G$ involve the element $a$. Since $\phi(a) \in
\gamma_1(G)$, we have $\phi^n(a) \in \gamma_n(G)$ for all $n$, and
thus that $[\nilp a] = [1]$ in $\nilp G_n$ for all $n$.

But $\phi$ is a mapping with remnant, and Wagner's algorithm can be
used to show that in fact $[a] \neq [1]$ in $G$.
\end{exl}

Though the above example shows that nilpotent quotients cannot 
be used directly to solve twisted conjugacy problems in
all cases, the technique gives very good success rates in experimental
testing. We have created an implementation\footnote{Available from the
  author's web site:
  \url{http://www.messiah.edu/~cstaecker}\journalversion{, and included as a supplement to this paper at
	\url{http://www.expmath.org/???}}} 
of the process in the computational algebra system \magma\ \cite{magma}. 

There are two ways that an implementation of this techinque
can fail to decide any given twisted conjugacy problem. One type of
failure is when the linear system which arises in the twisted
conjugacy computation in $\nilp G_n$ has infinitely many solutions. In
such a case the check in $\nilp G_{n+1}$ will require finding an
integral solution of a polynomial system, which will in general be
difficult. 
Such a failure can only occur when the coefficients in the linear
system give a singular matrix, which we expect to occur relatively
infrequently.

A second type of failure is that the implementation exhausts its
resources in computing the required Hall normal forms in $\nilp G_n$. Even for
groups of 2 or 3 generators, human computation of the Hall form in
class 3 or 4 is barely feasible. Since the number of basic commutators
of weight $n$ grows exponentially in $n$ we expect that any computer
implementation could conceivably exhaust its resources before
detecting that two given twisted conjugacy classes are indeed distinct. We
expect this type of failure to occur increasingly in free groups with
large numbers of generators.

Tables \ref{failtypes} and \ref{abelwag} give experimental results of
application of the 
nilpotent quotients technique to 10,000 randomly generated mappings on the
free group on $k$ generators for $k=2,3,4$. In each case, a number $l
\in \{2,3,4,5\}$ is chosen and a mapping is generated by assigning the
image of each generator to be a randomly chosen word of length at most
$l$. Theorem \ref{foxcalc} is applied to give a list of group elements
which must be divided into their twisted conjugacy classes. A
sucessful computation is one which is able to decide the twisted
conjugacy of these elements.

\begin{table}
\begin{center}
\begin{tabular}{|c|c| r @{.} l | r @{.} l | r @{.} l | r @{.} l | r
	@{.} l|}
\hline
$k$ & $l$ & 
\multicolumn{2}{|c|}{Success} &
\multicolumn{2}{|c|}{Matrix failure} & 
\multicolumn{2}{|c|}{Complexity failure} &
\multicolumn{2}{|c|}{Avg. depth} &
\multicolumn{2}{|c|}{$\sigma$} \\
\hline \hline
\multirow{4}{*}{2}
  & 2 & 94&27\% &  4&30\% &  2&97\% & 1&10 & 0&28 \\
  & 3 & 90&46\% &  6&66\% &  4&09\% & 1&13 & 0&34 \\
  & 4 & 85&91\% &  7&75\% &  9&06\% & 1&18 & 0&42 \\
  & 5 & 84&20\% &  9&89\% &  8&66\% & 1&17 & 0&38 \\
\hline
\multirow{4}{*}{3}
  & 2 & 87&45\% & 11&26\% &  4&38\% & 1&18 & 0&45 \\
  & 3 & 85&60\% & 13&77\% &  4&74\% & 1&17 & 0&42 \\
  & 4 & 85&20\% & 12&38\% &  6&19\% & 1&21 & 0&47 \\
  & 5 & 84&46\% & 13&36\% &  6&29\% & 1&23 & 0&45 \\
\hline
\multirow{4}{*}{4}
  & 2 & 88&87\% & 10&95\% &  3&48\% & 1&13 & 0&41 \\
  & 3 & 85&60\% & 13&77\% &  4&74\% & 1&16 & 0&42 \\
  & 4 & 84&51\% & 14&56\% &  5&87\% & 1&21 & 0&45 \\
  & 5 & 84&13\% & 15&04\% &  5&64\% & 1&21 & 0&42 \\
\hline
\end{tabular}
\end{center}
\caption{Results of testing for success rates on
random mappings of word length $l$ on the free group on $k$
generators.\label{failtypes}}  
\end{table}

Table \ref{failtypes} gives the rates of each of the two types of failure
above. A ``matrix failure'' is declared when the linear system
computation results in infinitely many solutions. A ``complexity
failure'' is declared when the nilpotency class reaches 5, as this is
the level at which the computation of the Hall normal form becomes
difficult for our implementation (run on a personal computer, current
in 2005). Since a single mapping can trigger both types of error
(computation of $\RT(\phi)$ in general requires several twisted
conjugacy decisions), the row percentages may not sum to $100\%$.
The column labeled ``average depth'' gives the average
nilpotency class required to distinguish twisted conjugacy classes in
these random mappings. A depth of $n$ indicates that a check in $\nilp
G_n$ was necessary. The column labeled $\sigma$ gives the standard
deviations of the depths.

\begin{table}
\begin{center}
\begin{tabular}{|c|c| r @{.} l | r @{.} l | r @{.} l |}
\hline
$n$ & $l$ & 
\multicolumn{2}{|c|}{Nilpotent quotients} &
\multicolumn{2}{|c|}{Abelianization} & 
\multicolumn{2}{|c|}{Wagner's Alg.} \\
\hline \hline
\multirow{4}{*}{2}
  & 2 & 94&27\% & 82&75\% & 41&80\% \\
  & 3 & 90&46\% & 70&45\% & 48&32\% \\
  & 4 & 85&91\% & 59&07\% & 54&71\% \\
  & 5 & 84&20\% & 51&14\% & 59&83\% \\
\hline
\multirow{4}{*}{3}
  & 2 & 90&98\% & 77&96\% & 15&54\% \\
  & 3 & 87&45\% & 65&99\% & 24&70\% \\
  & 4 & 85&20\% & 55&17\% & 33&72\% \\
  & 5 & 84&46\% & 47&43\% & 41&45\% \\
\hline
\multirow{4}{*}{4}
  & 2 & 88&87\% & 76&50\% &  5&84\% \\
  & 3 & 85&60\% & 64&76\% & 13&67\% \\
  & 4 & 84&51\% & 54&63\% & 22&63\% \\
  & 5 & 84&13\% & 47&81\% & 30&18\%\\
\hline
\end{tabular}
\end{center}
\caption{Comparison of success rates of various techniques on
random mappings of word length $l$ on the free group on $k$
generators.}\label{abelwag}  
\end{table}

Table \ref{abelwag} gives our sucess rates compared to the existing
techniques of abelianization and Wagner's algorithm. The technique
used for data in the ``Abelianization'' column uses only the
abelianization for distinguishing classes, and uses only the identity
$[\phi(g)] = [g]$ for equating classes (this is the general strategy
employed in \cite{hart05}). The column labeled ``Wagner's alg.''
records the percentages of maps satisfying Wagner's remnant condition,
for which her algorithm will apply (that the percentages grow in $l$
is expected in light of Theorem 3.7 of \cite{wagn99}).

\section{Surfaces without boundary}\label{surface}
The nilpotent quotients process can be used with minimal modifications
when $G$ is the fundamental group of a compact hyperbolic surface
without boundary. Work of Fadell and Husseini in \cite{fh83} together
with a technique by Davey, Hart, and Trapp \cite{dht96} reduce the
computation of the Nielsen number on compact hyperbolic surfaces
without boundary to the computation of twisted conjugacy classes in
the fundamental group.

Wagner's technique does not apply if $G$ is not a free group, and
neither will the techniques of \cite{bmmv06}. Because
surface groups are easily expressed in terms of commutator relations,
we can use the nilpotent quotients technique with only trivial
modifications in this setting.

The only modification that must be made is to the precise structure of
the Hall normal form. For instance, if $G$ is the fundamental group of
the genus 2 compact surface, then $G$ has group presentation $G =
\langle a, b, c, d | [a,b][c,d]=1 \rangle$. The Hall normal form of an
element of (e.g.) $\nilp G_2$ can be obtained by applying commutation
rules as if $G$ were the free group on 4 generators, along with an
additional rule that $[c,d]^{-1} = [a,b]$. It is convenient for us
that the group structure of $G$ is so compatible with the Hall normal
form. 

The case of surfaces without boundary is somewhat more difficult to
implement in \magma, as it involves computations in finitely-presented
rather than free groups. The capabilities of \magma\ are somewhat
lacking in this regard-- in particular \magma\ (as of version 2.13-15) 
is unable to reliably 
solve the word problem in a surface group (although this word problem
is solvable). This causes the candidates checking process to return
false negatives, as the implementation may not recognize when two
elements are actually equal. Statistics such as those in Table
\ref{failtypes} are also difficult to produce in this setting as it is
difficult to generate random endomorphisms of surface groups.

\section{Doubly twisted conjugacy}\label{doublesection}
We conclude with a brief discussion of how our technique can be
applied to the \emph{doubly twisted conjugacy} relation: Given two
maps $\phi, \psi: G \to H$ and two elements $h, k \in H$, we say that
$h$ and $k$ are (doubly) twisted conjugate (we write $[h]=[k]$) if
there is an element $g \in G$ with 
\[ h=\phi(g)k\psi(g)^{-1}. \]
This relation is fundamental in Nielsen coincidence theory (see
\cite{gonc05}), playing the same role as ordinary twisted conjugacy in
fixed point theory.

For any $n$, the maps $\phi$ and $\psi$ will induce maps $\nilp
\phi, \nilp\psi: \nilp G_n \to \nilp H_n$, and the doubly twisted
conjugacy relation can in principle be solved by using Hall normal
forms in $\nilp G_n$ and $\nilp H_n$ just as in the ordinary twisted
conjugacy problem.

\begin{exl}
Let $G = H = \langle a, b \rangle$, and let our maps be 
\[ \phi: \begin{array}{rcl} a &\mapsto &b^2 a \\
b &\mapsto &a^{-2} \end{array} 
\quad
\psi: \begin{array}{rcl} a &\mapsto &a^3 \\
b &\mapsto &a^{-1} \end{array}
\]
We will decide the twisted conjugacy of the elements $b$ and $b^{-1}$.

We begin with check in the abelianization, where any element $z
\in \abel G$ has the form $z = n\abel a + m\abel b$. We compute that
$\abel \phi(z) = (n-2m)\abel a + 2n \abel b$ and $- \abel \psi(z) =
(-3n+m) \abel a$, and thus we have
\[ \abel\phi(z) - \abel b - \abel \psi(z) = (-2n -m)\abel a +
(2n-1)\abel b. \]
Equating this with $\abel b$ and solving gives $n=1$ and $m=-2$. This
solution in the abelianization gives three candidates for twisted
conjugacy:
\[ ab^{-2}, b^{-1}ab^{-1}, b^{-2}a, \]
but checking each shows that none of these realize the twisted
conjugacy in $H$. 

We proceed to the class 2 nilpotent quotient, where any element
$z \in \nilp G_2$ has the form $z = \nilp a^n \nilp b^m [\nilp a,
  \nilp b]^k$. Our computation in $\abel G$ shows that $n=1$ and $m=-2$,
simplifying our element to $z = \nilp a \nilp b^{-2} [\nilp a, \nilp
  b]^k$. We compute
\begin{align*}
\nilp \phi(z) &= \nilp b^2 \nilp a (\nilp a^{-2})^{-2} [\nilp b^2 \nilp
  a, \nilp a^{-2}]^k = \nilp a^5 \nilp b^2 [\nilp a^5, \nilp b^2]
  [\nilp b^2, \nilp a^{-2}]^k = \nilp a^5 \nilp b^2 [\nilp a, \nilp
  b]^{10+4k}, \\
\nilp \psi(z) &= \nilp a^3 (\nilp a^{-1})^{-2} [\nilp a^3, \nilp
  a^{-1}]^k = \nilp a^5,
\end{align*}
and so
\begin{align*}
\nilp \phi(z) \nilp b^{-1} \nilp \psi(z)^{-1} &= \nilp a^5 \nilp b^2
	  [\nilp a, \nilp b]^{10 + 4k} \nilp b^{-1} \nilp a^{-5} = \nilp
	  a^5 \nilp b \nilp a^{-5} [\nilp a, \nilp b]^{10+4k}\\
 &= \nilp b
	  [\nilp a^{-5}, \nilp b] [\nilp a, \nilp b]^{10+4k} = \nilp b
	  [\nilp a, \nilp b]^{5+4k}.
\end{align*}
Equating this with $\nilp b$ gives $5+4k =0$ which is impossible for
integral $k$. Thus $[b] \neq [b^{-1}]$.
\end{exl}

There is in the literature no analogue of Theorem \ref{foxcalc} in
coincidence theory, but presumably one may be available in the future,
and our technique is currently the only available
technique for distinguishing doubly twisted conjugacy
classes (no version of Wagner's algorithm is known in coincidence
theory, and the methods of \cite{bmmv06} do not extend in an obvious
way to doubly twisted conjugacy).

Table \ref{double} gives success rates for the technique applied to
10,000 randomly generated twisted conjugacy
relations. In each case, two random mappings of ``word length'' $3$
(the quantity labeled $l$ in Tables \ref{failtypes} and \ref{abelwag})
are generated from the free group on $k_1$ generators to the free
group on $k_2$ generators. Two random elements of the codomain group
are generated with word length at most 3, and the implementation attempts
to determine their twisted conjugacy. Entries in the table with no
digits to the right of the decimal point are exact figures, e.g. in
the case where $k_1 = 4$ and $k_2=3$ the depth was exactly 1 in each
of the 10,000 test cases, and there were exactly 0 complexity failures.

\begin{table}
\begin{center}
\begin{tabular}{|c|c| r @{.} l | r @{.} l | r @{.} l | r @{.} l | r
@{.} l | r @{.} l | r @{.} l |}
\hline
$k_1$ & $k_2$ & 
\multicolumn{2}{|c|}{Success} &
\multicolumn{2}{|c|}{Matrix failure} & 
\multicolumn{2}{|c|}{Complexity failure} &
\multicolumn{2}{|c|}{Average depth} &
\multicolumn{2}{|c|}{$\sigma$} \\
\hline \hline
\multirow{4}{*}{2}
  & 2 &  92&38\%  &   4&07\%  &   3&55\%  &  1&49 & 0&95 \\
  & 3 &  98&97\%  &   1&03\%  &   0& \%   &  1&09 & 0&28 \\
  & 4 &  99&78\%  &   0&22\%  &   0& \%   &  1&03 & 0&17 \\
  & 5 &  99&90\%  &   0&10\%  &   0& \%   &  1&01 & 0&12 \\
\hline
\multirow{4}{*}{3}
  & 2 &  30&53\%  &  69&47\%  &   0& \%   &  1&   & 0& \\
  & 3 &  92&32\%  &   5&83\%  &   1&85\%  &  1&41 & 0&80 \\
  & 4 &  98&80\%  &   1&20\%  &   0&  \%  &  1&08 & 0&27 \\ 
  & 5 &  99&71\%  &   0&29\%  &   0&  \%  &  1&03 & 0&18 \\
\hline
\multirow{4}{*}{4}
  & 2 &  14&56\%  &  85&44\%  &   0&  \%  &  1&   & 0& \\
  & 3 &  32&88\%  &  67&12\%  &   0&  \%  &  1&   & 0& \\
  & 4 &  91&98\%  &   7&13\%  &   0&89\%  &  1&33 & 0&66 \\
  & 5 &  98&50\%  &   1&15\%  &   0&  \%  &  1&08 & 0&27 \\
\hline
\end{tabular}
\end{center}
\caption{Success rates for doubly twisted conjugacy relations.
Random mappings
of word length 3 from the free group on $k_1$ generators to the free
group on $k_2$ generators were tested in deciding twisted conjugacy
between two random words of length at most 3.\label{double}}
\end{table}

Note that the technique is much less successful if the rank of the
domain is greater than the rank of 
the codomain. This is to be expected, as the technique will fail when
our linear system computation (always having more variables than 
equations if $k_1 > k_2$) yields infinitely many solutions. Note that
such cases are not handled by our \magma\ implementation, but could in
principle be done by hand. These would require finding integer
solutions to polynomial systems, and so we do not always expect
the computation to be successful, but particular examples may be
computable. 

Especially striking are the extremely high sucess rates when $k_2 >
k_1$. The vast majority of these twisted conjugacy
relations are decided in the abelianization (and with negative
result), as the
overdetermined linear systems are unlikely to have any
solutions. For example in the case of $k_1 = 3$ and $k_2 = 5$, an
equivalence in the abelianization would require our random elements to
satisfy a linear system of 5 equations and 3 unknowns. If the elements
are indeed equivalent in the abelianization, a further equivalence in
the class two nilpotent quotient would require our random data to
satisfy a linear system of 10 equations and 3 unknowns (the free group
on 5 generators has 10 basic weight 2 commutators, and the free
group on 3 generators has 3 basic weight 2 commutators).
This is not, of course, to say that twisted conjugate elements do not
occur in such cases where $k_2 > k_1$, but only that this occurs very
infrequently in the case of two words of length 3.

We have omitted from our testing the cases where $k_1$ or $k_2$ is
1. The twisted conjugacy relation is generally solvable in these
cases by hand.

Let $\phi, \psi: G \to H$ are maps of free groups where $G = \langle a
\rangle$, and let $h,k\in H$ be two words. Then the
twisted conjugacy problem is equivalent to finding some integer $n$
with 
\[ 1 = h^{-1}\phi(a)^n k \psi(a)^{-n}, \]
and this can typically be confirmed or denied by inspection.

If $\phi, \psi: G \to H$ are maps of free groups and $H$ has rank 1,
let $G = \langle a_1, \dots, a_n \rangle$, and note that
$\phi(\gamma_1(G)) \subset \gamma_1(H) = 1$ since $H$ is abelian. Thus
we have $\phi(uv) = \phi(vu[v,u]) = \phi(vu)$ for any words $v, u \in G$, and
similarly $\psi(uv) = \psi(vu)$. Thus, though there is no convenient
normal form for elements $z \in G$, we can say in general that 
\[ \phi(z) = \phi(a_1^{m_1} \dots a_n^{m_n}), \quad \psi(z) =
\phi(m_1^{k_1} \dots m_n^{k_n}) \]
by rearranging the generators of $z$.

Now to decide the twisted conjugacy of elements $h,k \in H$, we
examine the equation
\[ h = \phi(a_1^{m_1} \dots a_n^{m_n}) k \psi(a_1^{-m_1} \dots
a_n^{-m_n}), \]
and we will be able to determine whether or not the above has
solutions because it is an equation in the abelian group $H$. If the
above has a solution, then the elements are twisted conjugate, and if
not, they are not.

\end{document}